\pgfplotsset{compat=1.18} 
\title{ }
\newtheorem{thm}{Theorem}
\newtheorem{prop}{Proposition}
\newtheorem{lem}{Lemma}
\begin{document}

\title
{ \textbf{ Contraction of   Convex Hypersurfaces in $\mathbb R^3$ by Powers of Principal Curvatures }  }
\author{Meraj Hosseini \\ {\small Department of Mathematics and Statistics, Concordia University} \\ {\small1455 Blvd. de Maisonneuve West, Montreal, QC, H3G 1M8, Canada} \\ {\small{\em meraj.hy@gmail.com}}}
\date{}

\maketitle \begin{abstract}
    We study the contraction of    strictly convex,  axially symmetric hypersurfaces by a non-symmetric, non-homogeneous, fully nonlinear function of curvature. Starting from   axially symmetric hypersurfaces with even profile curves,  we show  evolving hypersurfaces converge to a point in a finite time, and under proper rescaling, solutions will converge to  a convex hypersurface.
    
\end{abstract}

\def\thefootnote{ \ }
\footnotetext{{\em} $2010$ Mathematics Subject Classification: 53E99, 52A10, 53A04.
\par {\bf Keywords:} Non-symmetric and  non-homogeneous curvature flow, Convex hypersurfaces  }

 \section{Introduction}
 It was Mullins who first found solutions to the "curve shortening" flow \cite{MW}. 
  Within   decades different varieties of generalizations and studies
   were done in the plane by  many authors, most notably  by Gage and Hamilton  \cite{GH}, \cite{GA}, 
  \cite{GL},  by Andrews  \cite{BA},\cite{A5}, \cite{A7}, as well as many others    \cite{A}, \cite{GA}, \cite{GL}, \cite{ST}, \cite{AS4}, \cite{ASU}. We refer the reader to  \cite{Chou} for a more comprehensive account and list of references.
The mean curvature flow    is a well-known  generalization of the curve shortening flow to higher dimensions.  Huisken showed that the mean curvature flow converges to a round point in finite time \cite{H}.  The Gaussian curvature flow,  another generalization of the curve shortening flow to higher dimensions, 
 described by the following partial differential equation  was considered by Tso \cite{T}
 \begin{eqnarray}
   \pdv{X(x,t)}{t}= - K(x,t) \nu(x,t),& \textrm{  } & 
 \end{eqnarray}
 where $X$ is an embedding of a smooth, strictly convex hypersurface in $\mathbb{R}^n$ and $K$ is its Gaussian curvature.  
 Starting from a convex hypersurface, Tso \cite{T} showed that Gaussian curvature flow converges to a point in finite time, and rescaled solutions   converge to a convex hypersurface.  The following generalization of the Gaussian curvature flow was first studied by Chow \cite{C}
 \begin{eqnarray}\label{2}
   \pdv{X(x,t)}{t}= - K^{\beta}(x,t) \nu(x,t),& \textrm{  } & 
 \end{eqnarray}
 where $K$ is the Gaussian curvature, and $\beta>0$. Chow showed when $\beta= \frac{1}{n}$     rescaled solutions,  as in the case of the mean curvature flow, converge to a sphere.  When $\beta\neq \frac{1}{n}$, there is extensive literature studying the asymptotic behavior of the flow   (see \cite{BC} and references therein). 

 In this paper, we consider in $\mathbb{R}^3$ a version of the following generalization of the  flow  studied by Chow, equation (\ref{2}), which we will study in the $n$-space in a sequel of this paper:
 \begin{equation} \label{22}
    \pdv{X(x , t)}{t} =- \kappa_1^{\alpha_1}(x,t)  \kappa_2^{\alpha_2}(x,t)\dotsm\kappa_n^{\alpha_n}(x,t)\nu(x,t)
\end{equation}
 where $\kappa_1(x,t)\leq \kappa_2(x,t)\leq\dotsc\leq\kappa_n(x,t)$ are the principal curvatures and $\nu(x,t)$ is the outer normal to $K(t)$ at $X(x,t)$ and  $\alpha_1, \alpha_2,\dotsc,\alpha_n$ are positive real numbers. 
Contrary to  the  speed in the flow considered by Chow, Gaussian curvature,   the speed in the evolution equation (\ref{22})   is   non-homogeneous   and non-symmetric. 
Therefore, equation (\ref{22}) can be interpreted as  a more challenging generalization of the Gaussian  flow.
In the paper by Li and Lv   \cite{LL}  contraction of convex hypersurfaces by non-homogeneous speeds was studied for the first time. In their paper, the  speed functions, in addition to other properties, are  essentially required to be  symmetric and homogeneous of degree one.    Among other things, it was shown that such a flow evolves convex hypersurfaces to a point in finite time, and in fact, in hyperbolic space  roundness was proved  for sufficiently pinched initial hypersurfaces by high powers of the speed.  McCoy extended these results to hypersurfaces in Euclidean space. He  showed that, with sufficient initial curvature pinching,  the flow converges in finite time to points that are asymptotically spherical \cite{MJ}.  
For a review of the studied  non-homogeneous   flows in Euclidean space  
  see \cite{MJ1} and the references
therein. In all these works, in addition to other properties, the speed function is required to be
 symmetric and homogeneous of degree $1$, while in this paper the speed, $f(\kappa_1, \ldots, \kappa_n) =\kappa^{\alpha_1}_1 \ldots \kappa^{\alpha_n}_n  $,
  is neither
symmetric nor necessarily homogeneous of degree $1$.

We  restrict our attention to   the case in which  the initial surface,  $\partial K_0 $, viewed as the boundary of a smooth convex body $K_0$, is an axially symmetric surface   smoothly embedded in $  \mathbb{R}^3$.  
So, there exists a function $u_0:[0,1] \rightarrow \mathbb{R} $, meeting the $x$-axis orthogonally, strictly positive and smooth on $(0,1)$, with $u(0,0)=u(1,0)=0$,    such that $X_0:(0,1)\times \mathbb{S}^1 \rightarrow \mathbb{R}^3$, defined by $X_0(x,v)= (x,u_0(x)v)$,  parametrize  $\partial K_0$. We show that there exist   strictly positive, smooth functions  $u(\cdot,t) : (a_t,b_t) \rightarrow \mathbb{R}^{\geq 0}$, meeting the $x$-axis orthogonally,  with  $u(a_t,t)=u(b_t,t)=0$ satisfying  the following  equation, the scalar form of the  equation  (\ref{eq1}):
\begin{eqnarray} \label{Dr}
     \pdv{u(x,t)}{t}&=  -\frac{(-u_{xx})^{\alpha_2}}{u^{\alpha_1}(1+u_x^2)^{\frac{\alpha_1+3\alpha_2-1}{2}}} & \textrm{ on } (a_t,b_t)\times [0,\omega).
\end{eqnarray}  
 We show that, in finite time, solutions degenerate into a point, time at which curvatures develop a singularity, and,  under suitable initial conditions, the  solutions converge to a convex hypersurface. 
 {More precisely, suppose that the profile curve of the initial surface is even, that is, $u_0(x + p) = u_0(p - x)$ for every $x \in (c, b_0)$, where $p$ is the midpoint of the interval $(a_0, b_0)$. Then, we have the following theorem:
 \begin{thm}\label{thm1}
Suppose   smooth strictly convex embedded surface  $\partial K_0 \subset \mathbb R^3$   is axially symmetric  with an even profile curve. Then solutions of the equation (\ref{eq1})  exist on a maximal time interval $[0,\omega)$, and they  shrink  to a point as $t \longrightarrow \omega$. Furthermore, if solutions are rescaled to enclose  domains of constant volume,   they will converge sequentially in the Hausdorff metric to the boundary of a convex body.
 \end{thm}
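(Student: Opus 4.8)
The plan is to run the standard program for contracting curvature flows, adapting each step to the non-homogeneous, non-symmetric speed $f(\kappa_1,\kappa_2) = \kappa_1^{\alpha_1}\kappa_2^{\alpha_2}$ and using axial symmetry to reduce everything to the scalar equation (\ref{Dr}). First I would establish short-time existence: on the positive cone one has $\partial f/\partial\kappa_i = \alpha_i f/\kappa_i > 0$, so (\ref{eq1}) is parabolic on strictly convex data, and a smooth solution $X(\cdot,t)$ with $X(\cdot,0)=X_0$ exists for a short time by the usual linearization argument. By uniqueness, axial symmetry and the reflection symmetry of the profile about its midpoint are preserved, so the flow is governed by a smooth, strictly positive, even function $u(\cdot,t)$ on $(a_t,b_t)$ solving (\ref{Dr}), with $u\to 0$ and $u_x\to\mp\infty$ at the endpoints. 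Writing the rotational and meridional principal curvatures as $1/(u\sqrt{1+u_x^2})$ and $-u_{xx}/(1+u_x^2)^{3/2}$, one shows that each satisfies a parabolic evolution equation and applies the maximum principle to conclude that strict convexity is preserved.

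Next I would prove finite-time extinction. The enclosed volume satisfies $\frac{d}{dt}V(t) = -\int_{\partial K_t}\kappa_1^{\alpha_1}\kappa_2^{\alpha_2}\,d\mu < 0$, and by convexity $\partial K_t$ is trapped between an inscribed and a circumscribed sphere. A Tso-type estimate on a quantity of the form $f/(\langle X-z,\nu\rangle)$ for a fixed interior point $z$ bounds $\max f$ on $[0,T]$ for every $T<\omega$ and forces the circumradius to decay to $0$ in finite time; the even profile (whose maximum is attained at the midpoint) together with convexity then forces the inradius to $0$ as well, so $K_t$ shrinks to a point as $t\to\omega<\infty$ and the curvature blows up there.

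The technical core is the derivation of a priori estimates up to $\omega$. From the Tso-type quantity one gets a two-sided bound for $f$, convexity and evenness give an interior gradient bound for $u$, and then (\ref{Dr}) is uniformly parabolic on compact subintervals of $(a_t,b_t)$, so standard interior parabolic regularity (Krylov--Safonov, Evans--Krylov, Schauder bootstrapping) yields uniform $C^\infty$ bounds there. The delicate point is the behavior near the poles, where (\ref{Dr}) degenerates because of the factor $u^{-\alpha_1}$ and $u_x$ blows up: there one controls the surface by barriers --- for instance shrinking spheres tangent to the axis --- to see that it stays smooth and meets the axis orthogonally up to $\omega$. I expect this to be the main obstacle: without homogeneity one loses the scaling-invariant monotone quantities, and without symmetry one loses the standard convexity-preservation and pinching machinery, so the estimates must be produced by maximum-principle arguments tailored to (\ref{Dr}), and the singular pole behavior has to be handled directly.

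Finally I would rescale and extract a limit. Set $\tilde X(\cdot,t) = \phi(t)(X(\cdot,t)-c(t))$ with $\phi(t)>0$ chosen so that the rescaled enclosed volume is constantly $V(0)$ and with $c(t)$, say the center of the inscribed sphere, keeping the bodies anchored. The sphere-trapping and the estimates above show that the rescaled circumradius stays bounded above and the rescaled inradius bounded below, so $\{\tilde K_t\}$ is a precompact family of convex bodies that neither escapes to infinity nor collapses to a lower-dimensional set. By the Blaschke selection theorem there is a sequence $t_j\to\omega$ with $\tilde K_{t_j}\to K_\infty$ in the Hausdorff metric for a convex body $K_\infty$, and hence $\partial\tilde K_{t_j}\to\partial K_\infty$ in the Hausdorff metric, which is the assertion of the theorem. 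Note that no roundness of $K_\infty$ is claimed, so smooth convergence is not needed for this statement.
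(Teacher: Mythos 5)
Your outline is recognizably the standard contracting-flow program (parabolicity on the convex cone, convexity preservation, Tso estimate, regularity bootstrap, volume normalization plus Blaschke), and that is a \emph{genuinely different route} from what the paper does. The paper never invokes a Tso quantity or Krylov--Safonov/Evans--Krylov; it instead works entirely with the two scalar graph equations (\ref{sc1}) and (\ref{bn}), proves convexity by trapping the profile above a large shrinking circle and invoking a supersolution/subsolution comparison, and proves both $V(t)\to 0$ and convergence to a point by contradiction arguments run directly on the linear parabolic equations for the speed $H=u_t$ and for $u_{xx}$ (if the limit had positive volume or were a segment, the relevant quantity would stay bounded and the flow could be extended, or the endpoint behavior of $u_{xx}$ would be incompatible with the interior maximum principle). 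Your route has the advantage of plugging into well-developed machinery and would, if carried out, give more (e.g.\ interior smooth estimates); the paper's route is tailored to the rotational reduction and sidesteps the structural hypotheses that the general machinery needs.

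That said, there are two concrete gaps in your sketch that you should not wave away. First, the convexity-preservation step: you propose to show ``each principal curvature satisfies a parabolic evolution equation and apply the maximum principle,'' but for the speed $f=\kappa_1^{\alpha_1}\kappa_2^{\alpha_2}$ the relevant equation is degenerate at the boundary of the positive cone. In the graphical reduction, $\kappa_{rad}>0$ is automatic, so the issue is preserving $u_{xx}<0$, and the evolution equation for $u_{xx}$ (the paper's equation (\ref{lng})) contains coefficients such as $(-u_{xx})^{\alpha_2-1}$ and a reaction term with $(-u_{xx})^{\alpha_2-2}u_{xxx}^2$, which are singular or vanish as $u_{xx}\to 0$ depending on $\alpha_2$. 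A naive maximum-principle argument does not close at that boundary; this is precisely why the paper resorts to the sphere-comparison argument. Second, your Tso step conflates two distinct assertions: the Tso quantity $f/\langle X-z,\nu\rangle$ gives an \emph{upper} bound on the speed as long as $z$ stays in the interior (hence regularity up to the singular time), but it does not by itself force the circumradius to decay. Decay of the circumradius, and the finiteness of $\omega$, come from the comparison principle applied to an enclosing sphere evolving by $\dot R=-R^{-(\alpha_1+\alpha_2)}$; you should separate these. You should also note that Evans--Krylov requires concavity/convexity of the operator in the Hessian variable, which for (\ref{Dr}) holds only after observing that $-(-u_{xx})^{\alpha_2}$ is concave (or convex) in $u_{xx}$ depending on whether $\alpha_2\gtrless 1$; this is checkable but not automatic. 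None of these gaps is fatal to the strategy, but as written they are assertions, not proofs, and they are exactly the places where the non-symmetric, non-homogeneous speed bites.
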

\section{Preliminaries}
Let  a smooth, strictly  convex, axially symmetric surface $\partial K_0 \subset \mathbb R^3$, boundary of a convex body $K_0$,   be generated by revolving a strictly positive, smooth, concave function   $u: [0, a] \longrightarrow \mathbb R$ with $u(a)=u(0)=0$ about $x$-axis. 
Consider the parametrization $X:(0,a) \times [0,2\pi] \longrightarrow \mathbb{R}^3$   defined by
\begin{equation}\label{p1}
    X(x,\theta)=  \big(x, u(x) \cos\theta,u(x) \sin{\theta}\big).
\end{equation}
Then, for every $x \in (0,a)$ at  $\big(x,u(x)\big) \in \partial K$, the  principal curvatures are  equal  to
 \begin{eqnarray}
 \kappa_1(x)= \kappa_{rad}(x)=\frac{1}{u(x)\sqrt{1+u_x^2(x)}}, &  \kappa_2(x)=  \kappa_{axi}(x)=- \frac{u_{xx}(x)}{(1+u_x^2(x))^{\frac{3}{2}}}.
 \end{eqnarray}
We note that  since $K$ is smooth, $\kappa_{\min}, \kappa_{\max}$ are continuous and  poles are  umbilic points, then at poles we still have $\kappa_{rad}$ and $\kappa_{axi}$  defined by
  \begin{eqnarray}
  \kappa_1= \kappa_{rad}(p)= \lim_{x\longrightarrow p } \kappa_{rad}(x), \quad \text{and} \quad    \kappa_2=\kappa_{axi}(p)= \lim_{x\longrightarrow p } \kappa_{axi}(x),
\end{eqnarray}
 for $p \in \{ 0,a\}$.
In this paper, for $\alpha_1,\alpha_2 >0$ fixed, we consider   the following   flow 
 \begin{eqnarray}\label{eq1}
   \frac{\partial X}{\partial t} (p,t)=- \kappa^{\alpha_1}_{1}(p,t) \kappa^{\alpha_2}_{2}(p,t)\nu(p,t), && p \in \partial K.
\end{eqnarray}
We infer from equation (\ref{p1}) that for every $x \in (0,a)$,
$$\frac{\partial X}{\partial t}(x,\theta)=  (0, u_t \cos\theta,u_t \sin{\theta}).$$
If follows from
 $$\nu(x,\theta,t)=\frac{ (- u_x,\cos\theta,\sin \theta)}{\sqrt{1+u_x^2}}$$
 that
\begin{eqnarray}
    -  \kappa^{\alpha_1}_{rad}(x,t)\kappa^{\alpha_2}_{axi}(x,t)&=&\big< \frac{\partial X}{\partial t}(x,\theta), \nu(x,\theta,t)\big>  \\ &=& \big< (0, u_t \cos\theta,u_t \sin{\theta}), \frac{ (-u_x,\cos\theta,\sin \theta)}{\sqrt{1+u_x^2}}\big>=\frac{u_t}{\sqrt{1+u_x^2}}.  \nonumber
\end{eqnarray}
 Thus, the scalar evolution equation for the graph function $u$  is
 \begin{eqnarray}\label{sc1}
     u_t&=& -\sqrt{1+u_x^2}  \kappa^{\alpha_1}_{rad}(p,t)\kappa^{\alpha_2}_{axi}(x,t) \nonumber\\ &=& -\frac{(-u_{xx})^{\alpha_2} }{u^{\alpha_1}(1+u_x^2 )^{\frac{3\alpha_2+\alpha_1-1}{2}}}.
 \end{eqnarray}
Since the behavior of the flow around the poles is governed by similar equations, it is sufficient to study the behavior of the flow around one of the poles. More precisely, let $u_{\max}= u(x_0)$, and define $w:\big(-u(x_0),u(x_0)\big) \rightarrow [0,x_0)$ as follows:
\[   y \mapsto \left\{
\begin{array}{ll}
     u^{-1}(y)  & 0\leq y\leq  u_{\max}\\
     u^{-1}(-y)    & - u_{\max}\leq y< 0 
\end{array}. 
\right. \] 
About  $0=(0,0,0) \in \partial K$,   consider the    parametrization 
  $\bar X: B(0, u_{\max}) \longrightarrow \mathbb{R}^3$ defined by
  \begin{equation}\label{par}
     (y,z)\mapsto \big( v( y,z),y,z\big)   
  \end{equation}
   where $v(y,z) = w(\sqrt{y^2+z^2})= (u^{-1})(\sqrt{y^2+z^2})$.
It follows from
\begin{eqnarray}
    g^{ij}= \delta^{ij}- \frac{v_i v_j}{1+|\nabla v|^2}, \quad \text{and} \quad  h_{ij}= \frac{v^2_{ij}}{\sqrt{ 1+|\nabla v|^2}}
\end{eqnarray}
  that
  \begin{eqnarray}
 ( g^{ij})=\begin{bmatrix}
  \frac{1+v_z^2}{1+v_y^2+v_z^2} & -\frac{v_yv_z}{1+v_y^2+v_z^2}\\
 -\frac{v_yv_z}{1+v_y^2+v_z^2}& \frac{1+v_y^2}{1+v_y^2+v_z^2} 
  \end{bmatrix}, \quad \text{and} \quad
  (h_{ij})= \begin{bmatrix}
  \frac{v_{yy}}{\sqrt{1+v_y^2+v_z^2}} & \frac{ v_{yz}}{\sqrt{1+v_y^2+v_z^2}}\\
\frac{ v_{yz}}{\sqrt{1+v_y^2+v_z^2}}& \frac{v_{zz}}{\sqrt{1+v_y^2+v_z^2}}
  \end{bmatrix}.
  \end{eqnarray}
When $z=0$, we have that 
 \begin{eqnarray}
 ( g^{ij})=\begin{bmatrix}
   \frac{1}{1+v_y^2} & 0\\
 0& 1 
  \end{bmatrix}, \quad \text{and} \quad
  (h_{ij})= \begin{bmatrix}
  \frac{v_{yy}}{\sqrt{ 1+v_y^2}} & 0\\
0&\frac{  v_{zz}}{\sqrt{ 1+v_y^2}}
  \end{bmatrix}.
  \end{eqnarray}
As a result, along  the curve 
\begin{equation}\label{crv}
    \gamma=\Big\{ \big(y,v(y,0)\big): y \in \big(-u_{\max},u_{\max}\big)\Big\}
\end{equation}
the principal curvatures are
  \begin{eqnarray}
\kappa_{rad}(y,0)= \frac{v_{zz}}{(1+v_y^2)^{\frac{1}{2}}}, & \text{and} \quad   \kappa_{axi}(y,0)=  \frac{v_{yy}}{(1+v_y^2)^{\frac{3}{2}}},
\end{eqnarray}
    and   the  normal is 
     $$\nu(y,0)=\frac{(v_y,0,-1)}{\sqrt{v_y^2 +1}}.$$
     By direct calculations, we get
     $$(0,0,v_t(y,t))=\frac{\partial \gamma}{\partial t}(y,0,t)=- \kappa^{\alpha_1}_{rad}(y,0,t) \kappa^{\alpha_2}_{axi}(y,0,t) \nu(y,0,t).$$
     Therefore,
     \begin{equation} 
     \begin{split}
         & -  \kappa^{\alpha_1}_{rad}(y,t)\kappa^{\alpha_2}_{axi}(y,t)= \big< \frac{\partial \gamma}{\partial t}(y,0,t), \nu(y,\theta,t)\big>\\
         &  = \big< (0,0,v_t(y,t)) ,\frac{(v_y,0,-1)}{\sqrt{v_y^2 +1}}\big>=\frac{-v_t}{\sqrt{1+v_y^2}}. 
     \end{split}
 \end{equation}
So, the scalar evolution equation of the graph function  $v$  is
     \begin{equation}\label{bn}
       v_t =  \sqrt{1+v_y^2} \kappa^{\alpha_1}_{rad}(y,0,t) \kappa^{\alpha_2}_{axi}(y,0,t)=   \frac{1}{(1+v_y^2)^{\frac{\alpha_1+3\alpha_2-1}{2}}}   v^{\alpha_1}_{zz}v^{\alpha_2}_{yy}.
     \end{equation}
      We say smooth solutions for the equation (\ref{eq1}) exists if there exist smooth, strictly positive, concave functions $u(\cdot, t) : (a_t,b_t) \longrightarrow \mathbb R$ with $u(a_t,t)=u(b_t,t)=0$ satisfying equation (\ref{sc1}) such that for every $t$ the   curve $\gamma(t)$, defined by equation (\ref{crv}), is smooth.   
     \section{The Free Boundary Problem} 
  If we consider the inside and outside of the deforming hypersurface, starting with the initial hypersurface, as the interface separating two phases that are transforming, then the deformation of a hypersurface according to equation (\ref{eq1}) is  a free boundary problem. As in this problem, the only moving points are the points on the interface, the deforming hypersurface, the only non-zero PDE that appears when formulating  the problem is the equation (\ref{eq1}). 
Solutions of equation (\ref{Dr}) correspond to strictly convex hypersurfaces, the free boundaries,  moving according to  the equation (\ref{eq1}). 
In this section,  we show that solutions of equation (\ref{Dr})   exist, their convexity is preserved, and after a finite time, the surfaces of revolution described by the solutions converge to a point. We show that curvatures develop a singularity   when the volume enclosed by the solution becomes zero. Starting from a strictly convex, smooth surface of revolution the solutions always exist. We have 
\begin{lem}
Suppose that the initial conditions are the same as in Theorem \ref{thm1}. Then there exists $T>0$ such that solutions to the flow exist on $[0, T)$. 
\end{lem}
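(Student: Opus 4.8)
The plan is to recast the geometric flow (\ref{eq1}) as a scalar fully nonlinear parabolic equation on \emph{fixed} domains and to apply the standard short‑time existence theory. Because $\partial K_0$ is smooth and strictly convex, its radial and axial curvatures satisfy $0<c_0\le\kappa_{rad},\kappa_{axi}\le C_0$ everywhere, so the speed $F=\kappa_{rad}^{\alpha_1}\kappa_{axi}^{\alpha_2}$ is a smooth, strictly positive function of position. Away from the two poles I would use the profile parametrization (\ref{p1}), in which the flow is the one–dimensional equation (\ref{sc1}) (the same as (\ref{Dr})); on a compact subinterval $[c,d]\subset(a_0,b_0)$ bounded away from the poles this is uniformly parabolic, since there $u>0$, $-u_{xx}>0$ and the derivative of the right‑hand side in the second–order term, $\alpha_2(-u_{xx})^{\alpha_2-1}u^{-\alpha_1}(1+u_x^2)^{-(\alpha_1+3\alpha_2-1)/2}$, is strictly positive. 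Near each pole I would instead use the disk parametrization (\ref{par}); there $\kappa_{rad}$ and $\kappa_{axi}$ are the azimuthal and meridional normal curvatures, which extend to smooth functions of $(v,\nabla v,D^2v)$ on the whole disk, and the resulting fully nonlinear equation for $v$ — whose restriction to $\{z=0\}$ is (\ref{bn}) — is uniformly parabolic on a neighbourhood of the initial datum, because its linearization in $D^2v$ is positive definite once $\alpha_1,\alpha_2>0$ and $v$ is strictly convex.

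For the existence itself the argument is routine. One linearizes each scalar equation about the initial datum, solves the resulting linear parabolic problem in the parabolic Hölder spaces $C^{2+\gamma,\,1+\gamma/2}$ via interior and boundary Schauder estimates, and then closes a contraction–mapping argument — equivalently, an inverse‑function‑theorem argument between the relevant Hölder spaces — to obtain a unique solution of the nonlinear equation on a short time interval, which a standard bootstrap upgrades to a smooth solution. The finitely many local problems (profile chart away from the poles, a disk chart near each pole) are coupled only through compatibility of their data on the chart overlaps, and by uniqueness the local solutions agree there; by the rotational equivariance of (\ref{eq1}) the disk‑chart solutions stay axially symmetric. Hence the pieces assemble into a single smooth family $\{\partial K_t\}_{t\in[0,T)}$ of embedded surfaces of revolution solving (\ref{eq1}), described by functions $u(\cdot,t)$ as in the statement; the moving endpoints $a_t,b_t$, the strict positivity of $u(\cdot,t)$ on $(a_t,b_t)$, and its orthogonal (square‑root) contact with the axis are read off from the disk charts, in which the poles are ordinary interior points, while strict convexity persists on $[0,T)$ by continuity. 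Taking $T$ smaller than the minimum of the existence times of the local problems gives the claim.

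The only genuine obstacle is the behaviour at the poles: equation (\ref{Dr}), as written, degenerates there ($u\to0$ while $-u_{xx}\to\infty$), so it is neither uniformly parabolic nor even non‑degenerate near $a_t,b_t$, and the fact that these endpoints move rules out a global fixed‑domain formulation in the $u$–variable — this is the ``free boundary'' of the section title. The remedy is precisely the passage to the parametrization (\ref{par})–(\ref{bn}): in those coordinates the pole becomes an interior point at which $\partial K_0$ is smooth and uniformly convex, the free boundary disappears, and $F=\kappa_{rad}^{\alpha_1}\kappa_{axi}^{\alpha_2}$ extends to a smooth, uniformly elliptic operator of the graph function. By contrast, the non‑symmetry and non‑homogeneity of $F$ — the features that make the later analysis delicate — cause no difficulty here: once a parametrization is fixed the radial and axial directions are coordinate directions, so $F$ is simply a smooth function of the $2$‑jet of the graph function, and the classical short‑time existence theory for fully nonlinear parabolic equations applies verbatim.
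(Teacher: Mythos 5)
Your proposal follows essentially the same route as the paper's proof: linearize the scalar profile equation (\ref{sc1}) on a compact set bounded away from the poles, linearize the disk-chart equation near each pole where the pole becomes a regular interior point, verify parabolicity in each case, invoke classical short-time existence theory, and glue the local solutions by uniqueness. You are somewhat more explicit than the paper about why this two-chart decomposition is necessary — that (\ref{Dr}) degenerates at the moving endpoints $a_t, b_t$ and that (\ref{par})--(\ref{bn}) is what removes the free boundary — and about the equivariance and compatibility arguments needed to assemble the pieces, but the underlying strategy is the same.
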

\begin{proof}
 We consider a linearization of the equation (\ref{sc1}).
Suppose that $ \epsilon >0$ is a small real number and  $\Tilde{u}=u+\epsilon \phi$ is a solution of the equation (\ref{sc1})  where $ \phi:[0,a]\times[0,\omega) \longrightarrow \mathbb{R}$ is a  smooth function.
Since
 $$\pdv{\Tilde{u} }{\epsilon}\Big|_{\epsilon=0} =\frac{\alpha_2(-u_{xx})^{\alpha_2 -1}}{u^{\alpha_1}(1+u_x^2)^{\beta}}\phi_{xx} + \frac{2\beta u_x}{u^{\alpha_1}(1+u_x^2)^{\beta+1}}\phi_x + \frac{\alpha_1}{u(1+u_x^2)^\beta}\phi$$
we infer from   $ \Tilde{u}_t=u_t+\epsilon \phi_t$ that 
$$\phi_t = \frac{\alpha_2(-u_{xx})^{\alpha_2 -1}}{u^{\alpha_1}(1+u_x^2)^{\beta}}\phi_{xx} + \frac{2\beta u_x}{u^{\alpha_1}(1+u_x^2)^{\beta+1}}\phi_x + \frac{\alpha_1}{u(1+u_x^2)^\beta}\phi.$$
Since $\frac{\alpha_2(-u_{xx})^{\alpha_2 -1}}{u^{\alpha_1}(1+u_x^2)^{\beta}}>0$ at time $t=0$ the equation is parabolic and  it  follows from classical theory of parabolic equations that solutions of equation (\ref{sc1}) exist. Now, we consider  the evolution equation of  
 $v_0:\big(-\max u(0), \max u(0)\big)\longrightarrow \mathbb R: $
 
  \begin{equation}\label{sc}
       v_t =  \frac{1}{(1+v_y^2)^{\frac{\alpha_1+3\alpha_2-1}{2}}}   v^{\alpha_1}_{zz}v^{\alpha_2}_{yy}.
     \end{equation}
  Suppose that $ \epsilon >0$ is a small real number and  $\Tilde{v}=v+\epsilon \phi$ is a solution of the equation (\ref{sc})   where $\phi$ is  a smooth function.
 Since
\begin{equation*}
\begin{split}
     \pdv{\Tilde{v}_t}{\epsilon} &=-\big(\frac{3\alpha_2+\alpha_1-1}{2}\big) \frac{ 4(v_y+\epsilon \phi_y)}{(1+2(v_y+\epsilon \phi_y)^2)^{\frac{3\alpha_2+\alpha_1+1}{2}}} (v_{yy}+\epsilon \phi_{yy})^{\alpha_2} (y,t) v^{\alpha_1}_{zz}(y,t)\phi_y\\
     &+  \alpha_2 \frac{ 1}{(1+2(v_y+\epsilon \phi_y)^2)^{\frac{3\alpha_2+\alpha_1-1}{2}}}(v_{yy}+\epsilon \phi_{yy})^{\alpha_2-1} (y,t) v^{\alpha_1}_{zz}(y,t) \phi_{yy}
\end{split}
\end{equation*}
it follows from $\Tilde{v}_t=v_t+\epsilon \phi_t$ that
\begin{equation}\label{lz}
\begin{split}
 \phi_t&=     \frac{ \alpha_2(v_{yy})^{\alpha_2-1}  v^{\alpha_1}_{zz} }{(1+2(v_y )^2)^{\frac{3\alpha_2+\alpha_1-1}{2}}} \phi_{yy} 
 -\big(\frac{3\alpha_2+\alpha_1-1}{2}\big) \frac{ 4v_y (v_{yy}  )^{\alpha_2}   v^{\alpha_1}_{zz}}{(1+2(v_y )^2)^{\frac{3\alpha_2+\alpha_1+1}{2}}}  \phi_y.  
\end{split}
\end{equation}
Since equation (\ref{lz}) is parabolic,  solutions to the equation (\ref{sc}) exist.
Since the solution of equation (\ref{lz}) satisfies equation (\ref{sc1}) on its domain,
we infer from the uniqueness of solutions that the solutions of equation (\ref{sc1}) and equation (\ref{sc}) are the same on their common domain. We infer that solutions of equation (\ref{eq1}) exist.
\end{proof}
 \begin{lem}\label{ll}
Under the initial conditions specified in the Theorem \ref{thm1}, the flow preserves the convexity of the evolving surfaces as long as the solutions exist.
\end{lem}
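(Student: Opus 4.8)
The plan is a continuity argument combined with a maximum principle for the curvature of the generating curve. On a surface of revolution, strict convexity of the closed surface $\partial K_t$ is equivalent to positivity of both principal curvatures; since $\kappa_{rad}=\tfrac1{u\sqrt{1+u_x^2}}$ is automatically positive wherever $u>0$, and the poles are umbilic ($\kappa_{rad}=\kappa_{axi}$ there), the substantive claim is that $\kappa_{axi}>0$ is preserved — away from the poles this is the concavity $u_{xx}<0$ of the profile, and at a pole it is the positivity of $v_{yy}(0,t)$ for the radial graph $v$ of \eqref{par}, which satisfies a companion ODE obtained by differentiating \eqref{bn} twice in the radial variable and evaluating at $r=0$. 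Accordingly, let $t_{*}\le\omega$ be the first time strict convexity could fail; on $[0,t_{*})$ the surface is strictly convex, so all the geometric quantities below are bounded on compact subintervals, and it suffices to show $\min_{\partial K_t}\kappa_{axi}$ stays bounded away from $0$ on $[0,t_{*})$.

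The computational core is the evolution equation of $\kappa_{axi}$. Since \eqref{eq1} moves the generating curve in its half-plane with inward normal speed $F=\kappa_{rad}^{\alpha_1}\kappa_{axi}^{\alpha_2}$, the standard identity for the curvature of a plane curve under a normal flow gives, with $s$ arclength along the profile, $\partial_t\kappa_{axi}=\partial_s^2F+\kappa_{axi}^2F$. Expanding $\partial_s^2F$ via $\tfrac{\partial_sF}{F}=\alpha_1\tfrac{\partial_s\kappa_{rad}}{\kappa_{rad}}+\alpha_2\tfrac{\partial_s\kappa_{axi}}{\kappa_{axi}}$ and the Codazzi identity for surfaces of revolution $\partial_s\kappa_{rad}=\tfrac{\sin\theta}{u}(\kappa_{axi}-\kappa_{rad})$ (immediate from Section~2, with $\theta$ the tangent angle, $\partial_su=\sin\theta$, $\partial_s\theta=-\kappa_{axi}$), and then evaluating at a spatial minimum of $\kappa_{axi}$ — where all terms containing $\partial_s\kappa_{axi}$ drop — one is left with
\[
\partial_t\kappa_{axi}=\alpha_2\kappa_{rad}^{\alpha_1}\kappa_{axi}^{\alpha_2-1}\,\partial_s^2\kappa_{axi}
+F\Big(\alpha_1(\alpha_1-1)\tfrac{(\partial_s\kappa_{rad})^2}{\kappa_{rad}^2}+\alpha_1\tfrac{\partial_s^2\kappa_{rad}}{\kappa_{rad}}+\kappa_{axi}^2\Big),
\]
where at that point $\partial_s\kappa_{rad}=\tfrac{\sin\theta}{u}(\kappa_{axi}-\kappa_{rad})$ and $\partial_s^2\kappa_{rad}=\big(-\tfrac{\kappa_{axi}\cos\theta}{u}-\tfrac{2\sin^2\theta}{u^2}\big)(\kappa_{axi}-\kappa_{rad})$.

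Now run the maximum principle. Let $m(t)=\min_{\partial K_t}\kappa_{axi}$, realized at $p$. If $p$ is interior with $\kappa_{axi}(p)\ge\kappa_{rad}(p)$, then $m(t)\ge\kappa_{rad}(p)\ge\min_{\partial K_t}\kappa_{rad}>0$, which persists on $[0,t_{*})$. If $p$ is interior with $D:=\kappa_{axi}(p)-\kappa_{rad}(p)<0$, then convexity up to time $t$ forces $\cos\theta\ge0$, so $\tfrac{\partial_s^2\kappa_{rad}}{\kappa_{rad}}=\big(-\tfrac{\kappa_{axi}\cos\theta}{u}-\tfrac{2\sin^2\theta}{u^2}\big)\tfrac{D}{\kappa_{rad}}\ge0$, and when $\alpha_1\ge1$ also $\alpha_1(\alpha_1-1)\tfrac{(\partial_s\kappa_{rad})^2}{\kappa_{rad}^2}\ge0$; since $\partial_s^2\kappa_{axi}\ge0$ at a minimum, the displayed formula gives $\partial_t\kappa_{axi}\ge0$, so $m$ is non-decreasing at such instants (Hamilton's trick). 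If $p$ is a pole, the companion ODE for $v_{yy}(0,t)$ does the same, with the apparent $1/u$ singularity of the Codazzi term absent because $\kappa_{rad}-\kappa_{axi}=O(r^2)$ near the pole. Hence, for $\alpha_1\ge1$, $m(t)\ge\min\big(m(0),\inf_{[0,t]}\min_{\partial K_s}\kappa_{rad}\big)>0$ throughout $[0,t_{*})$, which forces $t_{*}=\omega$ and establishes the lemma. For $0<\alpha_1<1$ the term $\alpha_1(\alpha_1-1)\tfrac{(\partial_s\kappa_{rad})^2}{\kappa_{rad}^2}$ is negative, but $\partial_s\kappa_{rad}$ is proportional to $D$ and in this case $\kappa_{axi}$ is small, so below an explicit threshold determined by the (bounded, and for $\kappa_{rad}$ bounded below) local geometry the bracket is still positive, again giving $\partial_t\kappa_{axi}>0$ at a near-vanishing minimum; the same contradiction follows.

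The main obstacle is the degeneracy of the speed $F=\kappa_{rad}^{\alpha_1}\kappa_{axi}^{\alpha_2}$, which vanishes to order $\alpha_2$ along $\{\kappa_{axi}=0\}$: for $\alpha_2<1$ the linearisation \eqref{lz} is not uniformly parabolic as $\kappa_{axi}\to0$, so one cannot simply invoke the classical strong maximum principle, and the crude bound $\partial_t\kappa_{axi}\ge-C(t)\kappa_{axi}^{\alpha_2}$ is too weak because $C(t)$ can blow up as $t\to\omega$. The representation above is arranged precisely around this: the only possibly unfavourable terms all carry the vanishing factor $F$, while the genuine second-order contribution $\alpha_2\kappa_{rad}^{\alpha_1}\kappa_{axi}^{\alpha_2-1}\partial_s^2\kappa_{axi}$ keeps the good sign at a minimum, so that in the relevant regime one actually has $\partial_t\kappa_{axi}\ge0$, not merely $\partial_t\kappa_{axi}\ge-C\kappa_{axi}$. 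If one prefers to avoid the degenerate equation altogether, the whole argument can be run on the mollified flows used in the preceding lemma to construct the solution and the inequality passed to the limit. The remaining care — the pole regions and the fact that $\kappa_{rad}$ can degenerate only as $t\to\omega$ — is absorbed by the overlapping $u$- and $v$-parametrisations and by the boundedness of the geometric quantities on $[0,T]$ for every $T<\omega$.
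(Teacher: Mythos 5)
Your argument is a genuinely different route from the paper's, and both lead to the lemma. The paper does not touch the evolution equation of $\kappa_{axi}$ at all; instead it linearizes the scalar equation (\ref{sc1}) (resp.\ (\ref{bn}) near the poles) to obtain a parabolic equation for the speed $H=u_t$ (resp.\ $\eta=v_t$), chooses a large sphere of radius $r$ whose normal speed is dominated pointwise by the initial speed $|H(\cdot,0)|$ --- here evenness of the profile is used to centre the sphere at the unique maximum of $u$ and to split the interval symmetrically --- and then invokes a comparison principle to keep $|H|\geq \sqrt{1+y_x^2}\,r^{-(\alpha_1+\alpha_2)}>0$ for all later times, concluding that the product $\kappa_{rad}^{\alpha_1}\kappa_{axi}^{\alpha_2}$ can never vanish. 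You, instead, write down the evolution of $\kappa_{axi}$ itself via the plane-curve identity $\partial_t\kappa_{axi}=\partial_s^2F+\kappa_{axi}^2F$, use the Codazzi relation $\partial_s\kappa_{rad}=\tfrac{\sin\theta}{u}(\kappa_{axi}-\kappa_{rad})$ to make the zeroth-order terms explicit, and apply Hamilton's trick at the spatial minimum of $\kappa_{axi}$. Your computation buys something the paper's does not: it isolates the degeneracy clearly (every potentially bad reaction term carries the vanishing factor $F$) and does not rely on evenness, whereas the paper's barrier construction is tied to the symmetric profile. Conversely, the paper's comparison is shorter on the page, though the phrase ``supersolutions dominate subsolutions'' is applied to two speeds that satisfy linearized equations with \emph{different} coefficients, a point that is passed over quickly there.

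One remark on your case distinction: the separate treatment of $0<\alpha_1<1$ is unnecessary. Writing $\partial_s\kappa_{rad}/\kappa_{rad}=D\tan\theta$ with $D=\kappa_{axi}-\kappa_{rad}$, the bracket at a spatial minimum with $D<0$ becomes
\[
\tan^2\theta\,\alpha_1 D\bigl[(\alpha_1-1)D-2\kappa_{rad}\bigr]\;+\;\kappa_{axi}\bigl(\alpha_1(\kappa_{rad}-\kappa_{axi})+\kappa_{axi}\bigr),
\]
and the first group is nonnegative for \emph{all} $\alpha_1>0$: when $\alpha_1<1$ one has $0<(\alpha_1-1)D=(1-\alpha_1)(\kappa_{rad}-\kappa_{axi})<\kappa_{rad}<2\kappa_{rad}$, so $(\alpha_1-1)D-2\kappa_{rad}<0$ and the product of two negatives is positive. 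The second group is strictly positive whenever $\kappa_{axi}>0$, so $\partial_t\kappa_{axi}>0$ at the minimum in the regime $D<0$ for every $\alpha_1,\alpha_2>0$, and the threshold argument you sketch is not needed. The remaining small gaps are the ones you already flag yourself --- the degenerate parabolicity for $\alpha_2\neq1$, to be handled by working on the mollified flows from the existence lemma, and the companion computation at the poles, where the $1/u$ factor is compensated by $\kappa_{axi}-\kappa_{rad}=O(r^2)$; neither affects the structure of the argument.
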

\begin{proof}
We set $H=u_t$ and differentiate  equation (\ref{sc1}) with respect to time  to get the evolution equation of the speed:
\begin{equation} \label{H}
\begin{split}
  H_t &=  \frac{\alpha_2(-u_{xx})^{\alpha_2-1}u^{\alpha_1}(1+u_x^2)^{\beta}}{u^{2\alpha_1}(1+u_x^2)^{2\beta}}H_{xx} + \frac{2\beta u_xu^{\alpha_1}(-u_{xx})^{\alpha_2}(1+u_x^2)^{\beta-1}}{u^{2\alpha_1}(1+u_x^2)^{2\beta}}H_{x}\\
  &+ \frac{\alpha_1 u^{\alpha_{1}-1}(-u_{xx})^{\alpha_2}(1+u_x^2)^{\beta}}{u^{2\alpha_1}(1+u_x^2)^{2\beta}}H. 
\end{split}
\end{equation}
 We choose $r >0$ to be big enough such that
    \begin{equation}\label{I}
        \kappa_1^{\alpha_1}\kappa_2^{\alpha_2}(x, 0) > \big(\frac{1}{r}\big)^{\alpha_1+\alpha_2}. 
    \end{equation}
    Suppose that the maximum of $u(\cdot,0)$  happens at $q$ and consider the sphere generated by revolving $(x-q)^2+ y^2=r^2$ around $x$-axis. By direct calculation, we get 
    $$y_x = \frac{-(x-q)}{\sqrt{r^2-(x-q)^2}}.$$
    Since maximum of $u$ happens at $q$, $u_x(q)=0$. Since $u_x$ and $ y_x$ are continuous and $u_x(q)=y_x(q)=0$ we infer from  equation (\ref{I}) that there exist $\epsilon>0$ such that for every $ x \in (q-\epsilon,q+\epsilon)$ we have 
       \begin{equation}\label{II}
      \sqrt{1+u_x^2}  \kappa_1^{\alpha_1}\kappa_2^{\alpha_2} > \sqrt{1+y_x^2}\big(\frac{1}{r}\big)^{\alpha_1+\alpha_2}. 
    \end{equation}
    Now, let small $\delta >0$   be arbitrary, and  let $$0<e = \min\limits_{x \in [q+\epsilon,a-\delta]}\sqrt{1+u_x^2}.$$
    For every $x \in [ q+\epsilon, a-\delta]$, since $x \leq a-\delta$ we infer that
    \begin{equation}
      \frac{ (a-\delta-q)^2}{ r^2-(a-\delta-q)^2 }  \geq \frac{ (x-q)^2}{ r^2-(a-\delta-q)^2 }  \geq y^2_x = \frac{ (x-q)^2}{ r^2-(x-q)^2 }.
    \end{equation}
   If we choose $r>0$ to be big enough, then for every $x \in [ q+\epsilon, a-\delta] $
    \begin{equation}\label{III}
     u_x^2\geq     \frac{ (a-\delta-q)^2}{ r^2-(a-\delta-q)^2 }  \geq \frac{ (x-q)^2}{ r^2-(a-\delta-q)^2 }  \geq y^2_x = \frac{ (x-q)^2}{ r^2-(x-q)^2 }.
    \end{equation}
  Since $u$ is even  we infer from     equations (\ref{I}) and (\ref{III}) that    for every $\delta >0$ small enough  there exist $r>0$ big enough such that for every $x \in [ \delta, a-\delta]$ we have 
     \begin{equation}\label{IIII}
       -H(x,0) =  \sqrt{1+u_x^2} \kappa^{\alpha_1}_{rad}\kappa^{\alpha_2}_{axi}(x,0) 
   \geq - y_t(x,0) 
    = \sqrt{1+y_x^2}(\frac{1}{r})^{\alpha_2+\alpha_1}.
    \end{equation}
  Let 
$u(\cdot,t):(a_t,b_t)\longrightarrow \mathbb R$ and $y(\cdot,t):(c_t,d_t)\longrightarrow \mathbb R$  be the   solutions corresponding respectively to evolving surfaces and shrinking balls. Since supersolutions dominate subsolutions, the equation (\ref{IIII}) will be  preserved on $[0,t]$ for every $t \in [0,\omega)$. Therefore, for every $t \in [0,\omega)$,  and every $x \in (a_t ,b_t)$, we have 
$$\kappa_{axi}(x,t), \kappa_{rad}(x,t) >0.$$
 We apply the same argument to the poles. Namely,
   consider the curve $v_0$. As we have seen the principal curvatures along $v(y,0)=v_0$ are
  \begin{eqnarray}
\kappa_1=\kappa_{rad}(y,0)= \frac{v_{zz}}{(1+v_y^2)^{\frac{1}{2}}}, & \text{and} \quad  \kappa_2= \kappa_{axi}(y,0)=  \frac{v_{yy}}{(1+v_y^2)^{\frac{3}{2}}}.
\end{eqnarray}
 Since minimum of $v_0$ happens at $0$, $v_y(0,0)=0$. Let $r>0$ be big enough that equation (\ref{I}) holds, and consider the sphere generated by revolving $(x-c)^2+y^2=r^2$ around $x$-axis.  We note that
 $$x_y = \frac{-y}{\sqrt{ r^2-y^2}}.$$
 Since $v_y ,x_y$ are continuous and $v_y(0,0)=x_y(0,0)=0$ we infer   from the equation (\ref{I}) that there exist $\epsilon>0$ such that for every $ y \in ( -\epsilon,\epsilon)$ we have 
       \begin{equation}\label{IIIII}
      \sqrt{1+v_y^2}  \kappa_1^{\alpha_1}\kappa_2^{\alpha_2} > \sqrt{1+x_y^2}\big(\frac{1}{r}\big)^{\alpha_1+\alpha_2}. 
    \end{equation}
 Let $t \in [0,\omega)$ be arbitrary, $\epsilon >0$  be small enough.  As we have seen, equation (\ref{bn}) gives the evolution equation of $v_0$.  We  find the evolution equation of the speed $v_t$. Consider 
 $ \eta :[ -\epsilon, \epsilon] \times [0,t]\longrightarrow \mathbb{R}   $   defined by 
 \begin{equation} \label{eta}
    \eta:=   v_t =  \frac{1}{(1+v_y^2)^{\frac{\alpha_1+3\alpha_2-1}{2}}}   v^{\alpha_1}_{zz}v^{\alpha_2}_{yy}.
     \end{equation}
We set $\frac{\alpha_1+3\alpha_2-1}{2}= \mu$, and differentiate $\eta$ with respect to time to get the evolution equation for the speed of the evolving curves $v(y,t)$:
\begin{equation}\label{ve}
\begin{split}
  \eta_t(y,t)&= \Big[ \frac{-2\mu v_y  v^{\alpha_1}_{zz}v^{\alpha_2}_{yy}}{(1+v_y^2)^{\mu+1} }\Big] \eta_y+ \Big[ \frac{\alpha_1  v^{\alpha_2}_{yy}v^{\alpha_1-1}_{zz}}{(1+v_y^2)^{\mu} }\Big] \eta_{zz}+   \Big[ \frac{\alpha_2  v^{\alpha_2-1}_{yy}v^{\alpha_1}_{zz}}{(1+v_y^2)^{\mu} }\Big] \eta_{yy}\\
  &=     \Big[ \frac{\alpha_2  v^{\alpha_2-1}_{yy}v^{\alpha_1}_{zz}}{(1+v_y^2)^{\mu} }\Big] \eta_{yy} +\Big[ \frac{-2\mu v_y  v^{\alpha_1}_{zz}v^{\alpha_2}_{yy}}{(1+v_y^2)^{\mu+1} }\Big] \eta_y+  \frac{ \alpha_1 \eta_{zz}}{v_{zz}} \eta.
 \end{split}
\end{equation}
  So, the evolution equation of $\eta$ is   parabolic with uniformly bounded continuous coefficients on $[-\epsilon,\epsilon] \times [0,t]$. 
 Since supersolutions dominate subsolutions equation (\ref{IIIII}) holds on $[-\epsilon,\epsilon] \times [0,t]$. In turn, the principal curvatures remain positive. 
 \end{proof}
 \section{Singularity}
   We infer from Blaschke selection theorem that at final time $\omega$ there exists a convex set $K(\omega)$  that solutions converge to.  In this section,  we first show the volume enclosed by   $K(\omega)$  is zero. Then, we show,  actually, solutions converge to a single point as $t \longrightarrow \omega$.
\begin{prop}
Suppose that the initial conditions are the same as in theorem \ref{thm1}, and suppose that $[0,\omega)$  is the maximal time interval on which the solutions exist.  Then
\begin{eqnarray*}
 V(t) \rightarrow 0 \textrm{ as } t \rightarrow \omega.
\end{eqnarray*}
 \end{prop}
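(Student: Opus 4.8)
The plan is to show that $V$ is strictly decreasing, so that $V(\omega):=\lim_{t\to\omega}V(t)$ exists and is $\ge 0$, and then to exclude $V(\omega)>0$ by contradicting the maximality of $\omega$. Since the curvatures stay positive by Lemma~\ref{ll}, the speed $f:=\kappa_1^{\alpha_1}\kappa_2^{\alpha_2}$ is positive and the flow moves every boundary point strictly inward; the first variation of volume, or equivalently differentiating $V(t)=\pi\int_{a_t}^{b_t}u^2\,dx$ and using $u(a_t,t)=u(b_t,t)=0$ to kill the Leibniz terms, gives
\[
V'(t)=-\int_{\partial K(t)}\kappa_1^{\alpha_1}\kappa_2^{\alpha_2}\,d\mu_t=-2\pi\int_{a_t}^{b_t}\frac{u^{1-\alpha_1}(-u_{xx})^{\alpha_2}}{(1+u_x^2)^{\beta}}\,dx<0 .
\]
Hence $V$ is strictly decreasing and bounded below by $0$, so the limit $V(\omega)$ exists and is $\ge 0$; moreover, comparing $\partial K(t)$ from outside with the shrinking sphere through $\partial K_0$, which disappears in finite time (comparison being available, as already used in Lemma~\ref{ll}), gives $\omega<\infty$.

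Suppose now $V(\omega)>0$. Because $f>0$ the bodies $K(t)$ are nested and decreasing, so they converge in the Hausdorff metric to the convex body $K(\omega)=\bigcap_{t<\omega}K(t)$, with $\mathrm{vol}\,K(\omega)=V(\omega)>0$; thus $K(\omega)$ has nonempty interior. Since $K(\omega)$ is axially symmetric, it contains a ball $B(0,2\rho_0)$ centred on the axis for some $\rho_0>0$, hence $B(0,2\rho_0)\subseteq K(t)$ for all $t\in[0,\omega)$, while $K(t)\subseteq K_0\subseteq B(0,R_0)$. Taking the origin at the centre of that inner ball, the support function $S(\cdot,t)=\langle X,\nu\rangle$ satisfies $2\rho_0\le S\le R_0$ on $[0,\omega)$, so $\rho_0\le S-\rho_0\le R_0$. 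I would then run Tso's argument on $Q:=f/(S-\rho_0)$: combining the evolution equation of the speed --- the linearizations (\ref{H}) and (\ref{ve}), which are parabolic --- with $\partial_t S=-f+\langle X,\nabla f\rangle$, a standard computation shows that at an interior spatial maximum of $Q$ one has $\partial_t Q\le\mathcal{L}Q+c_1Q-c_2Q^2$, where $\mathcal{L}$ is the linearized operator and $c_1,c_2>0$ depend only on $\rho_0,R_0,\alpha_1,\alpha_2$; therefore $\sup_{[0,\omega)}Q<\infty$ and so $f=\kappa_1^{\alpha_1}\kappa_2^{\alpha_2}\le C$ on $[0,\omega)$.

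It then remains to upgrade this speed bound to two-sided bounds on $\kappa_1$ and $\kappa_2$. The essential point is a lower bound $\kappa_1=\kappa_{\mathrm{rad}}\ge c>0$ on $[0,\omega)$ --- equivalently, that the focal distance $u\sqrt{1+u_x^2}$ does not blow up, so that the surface develops no flat radial direction --- after which $f\le C$ forces $\kappa_2=\kappa_{\mathrm{axi}}\le(C/c^{\alpha_1})^{1/\alpha_2}$, giving uniform two-sided curvature bounds. I would try to establish such a lower bound by a maximum-principle argument on the profile equations (using (\ref{sc1}) away from the poles and (\ref{sc}) near them), exploiting the preserved convexity together with the uniform inner and outer balls, possibly via a barrier comparison against a family of spheres. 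Granting the curvature bounds, (\ref{sc1}) and (\ref{sc}) become uniformly parabolic with H\"older coefficients on $[0,\omega)$, so interior Schauder and Krylov--Safonov estimates yield uniform $C^\infty$ bounds up to $t=\omega$; then $\partial K(\omega)$ is a smooth, strictly convex, axially symmetric surface, and by the short-time existence lemma the flow continues past $\omega$, contradicting maximality. Hence $V(\omega)=0$. I expect this last step to be the main obstacle: Tso's estimate controls only the \emph{product} $\kappa_1^{\alpha_1}\kappa_2^{\alpha_2}$, and separating the two principal curvatures --- keeping $\kappa_{\mathrm{rad}}$ bounded below --- is exactly where the axial symmetry and the explicit forms (\ref{sc1}), (\ref{sc}) must be used.
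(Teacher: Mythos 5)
Your route differs substantially from the paper's. You obtain a global speed bound via Tso's ratio $Q = f/(S-\rho_0)$ and an inner/outer ball pair, whereas the paper argues pointwise: since $V(\omega)>0$, the limit profile $u(\cdot,\omega)$ is a concave function on an interval of positive length, hence twice differentiable a.e., so there is an interior $x_0$ at which $\kappa_{\mathrm{axi}}(x_0,\omega)<\infty$ and hence the speed is bounded; evenness supplies a symmetric partner $x_1$, and the maximum principle for the parabolic equation (\ref{H}) satisfied by $H=u_t$ on $[x_0,x_1]\times[0,t]$ propagates that bound, with a separate argument in the pole chart using $\eta=v_t$ and equation (\ref{ve}) covering the remainder. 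The paper's argument never introduces the support function and is tailored to the axially symmetric, even setting; Tso's estimate is the more robust and general tool, and your version would in particular survive dropping the evenness hypothesis.

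The gap you flag at the end --- passing from a bound on the product $\kappa_1^{\alpha_1}\kappa_2^{\alpha_2}$ to a lower bound on $\kappa_{\mathrm{rad}}$ --- is precisely the step the paper resolves by elementary convex geometry rather than by a further barrier or maximum-principle argument: because $u(\cdot,\omega)$ is concave, $|u_x|$ is uniformly bounded on compact subintervals of the interior, and since $u$ is bounded above by its initial maximum, $\kappa_{\mathrm{rad}}=(u\sqrt{1+u_x^2})^{-1}$ is bounded away from zero there; combined with the speed bound this forces $\kappa_{\mathrm{axi}}$ to stay bounded, and the pole chart handles the endpoints. So what you call the main obstacle has a short answer in this symmetric setting, and your proposal is incomplete without it. Note also that both your sketch and the paper's proof then assert the flow can be continued past $\omega$; strictly speaking this also requires $\kappa_{\mathrm{axi}}$ to stay bounded away from zero so that the coefficient $\alpha_2(-u_{xx})^{\alpha_2-1}$ in the linearized operator remains nondegenerate, a point that neither argument spells out, so your instinct to insist on two-sided bounds is well placed.
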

  \begin{proof}
Suppose for contradiction that $V(\omega) \neq 0$. Therefore, there exist   $[a,b]$ with $a<b$  such  that $[a,b] \subset (a_t,b_t)$ for every $t \in [0,\omega)$. Suppose that $[a,b]$  is the longest  interval with this property, and  
  suppose $K(\omega)$ is the convex body with the profile curve $u(x,\omega)= \inf\limits_{t \in [0,\omega)} u(x,t)$.  Since $u(x,\omega)$ is concave, it is differentiable except at finitely many points and second differentiable almost everywhere.  Therefore, there exist $x_0 \in (a,b)$ such that  
\begin{eqnarray}
 \kappa_{axi}(x_0,\omega):=\sup\limits_{t \in [0,\omega)} \kappa_{axi}(x_0,t)=  \sup\limits_{t \in [0,\omega)} \frac{(-u_{xx})  }{ (1+u_x^2 )^{\frac{3 }{2 }}}<\infty
\end{eqnarray}
 Since $u(x_0,t)$ is bounded from below
\begin{equation}
  \sup\limits_{t \in [0,\omega)}\kappa_{rad}(x_0,t)= \frac{1  }{ u(1+u_x^2 )^{\frac{1 }{2 }}}<\frac{1  }{ u }\leq\frac{1}{\epsilon}<\infty 
\end{equation}
for some $\epsilon >0$. Since $u^2_x(x_0, \omega)$  exist we infer from   the last two inequalities that for some $C>0$
\begin{equation}\label{31}
 \sup\limits_{t \in [0,\omega)} -u_t(x_0,t)= \sup\limits_{t \in [0,\omega)}\sqrt{1+u^2_x}     \kappa^\alpha_{axi}(x_0,t)\kappa^{\beta}_{rad}(x_0,t) < C.
\end{equation}
We distinguish two parts:
\begin{itemize}
    \item[Part 1:] Since $u(\cdot, \omega)$ is even, there exist two points, say $x_0$ and $x_1$, with the same speed,  around the midpoint of $(a, b)$ such that the equation (\ref{31}) holds at these points. For every $t \in [0,\omega)$, consider $H:[x_0,x_1]\times [0,t] \longrightarrow \mathbb R$ defined in previous lemma, $H=u_t.$ Since the evolution equation of $H$, the equation (\ref{H}),  is parabolic with uniformly bounded coefficients, it follows from maximum principle that  
    \begin{equation}
       \max\limits_{[x_0,x_1]\times [0,t]} -u_t \leq \max \big\{ \max\limits_{t\in [0,\omega)}-u_t(x_0,t), \max\limits_{[x_0,x_1]}-u_t(x,0)\big\} < \infty.
    \end{equation}
    So, the speed $u_t:[x_0,x_1]\times [0,\omega) \longrightarrow \mathbb R$ is bounded on   $[x_0,x_1]$.  We claim that the principal curvatures remain bounded from above on $[x_0,x_1]$. To see  this, we note that the lower bound on $u$ implies an upper bound on the radial curvature. If the axial curvature becomes infinite at a certain point, the boundedness of the product of curvatures implies the radial curvature tends toward zero at that point. The only way for the radial curvature to approach zero is if the derivative at that point   becomes infinite. However, this is impossible as $u(\cdot,\omega)$ is concave, and even if the derivative does not exist, it cannot be infinite.
 \item[Part 2:]
For every $t \in [0,\omega)$, consider     $\eta :[-c,c] \times [0,t] \longrightarrow \mathbb R$ defined by equation (\ref{eta})  where $$0< c =   u(x_0,\omega). $$   Since for every $t \in [0,\omega)$, the evolution equation of $\eta$, the equation (\ref{ve}), is   parabolic with uniformly bounded coefficients on $ [-c,c] \times [0,t]$, we conclude that 
   \begin{equation}
       \max\limits_{[-c,c]\times [0,t]} \eta  \leq \big\{ \max\limits_{t\in [0,\omega)}\eta (c,t), \max\limits_{y\in [-c,c]} \eta(y,0)\big\} < \infty.
    \end{equation}
     So, the speed $\eta:[-c,c] \times [0,\omega) \longrightarrow R$ is bounded on $[-c,c]$. Consequently, the product of curvatures at $y=0$, which is an umbilic point and corresponds to $x=a$, remains bounded.
\end{itemize}
Since the choice of $x_0$ and $ x_1$ is flexible, we infer from Part 1 and Part 2 that the principal curvatures of $K(\omega)$ are bounded from above, and in turn, the flow can be continued beyond $\omega$. This is impossible as $[0,\omega)$ is the maximal time interval on which solutions exist. So, as the final time is approached,  the volume of the domain enclosed by the solutions tends to zero.
 \end{proof}
 \begin{prop}
     The flow converges to a single point.
 \end{prop}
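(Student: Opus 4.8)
The goal is to improve the previous proposition, which yields $V(t)\to 0$, to $\operatorname{diam}K(t)\to 0$. I would begin with two structural reductions. For each fixed $x$ the map $t\mapsto u(x,t)$ is non-increasing, since $u_t=-\sqrt{1+u_x^2}\,\kappa_{rad}^{\alpha_1}\kappa_{axi}^{\alpha_2}\le 0$ by (\ref{sc1}); hence $a_t$ is non-decreasing, $b_t$ non-increasing, the limits $a_\omega\le b_\omega$ of the endpoints exist, and $u(x,\omega):=\lim_{t\to\omega}u(x,t)$ is a concave non-negative function on $(a_\omega,b_\omega)$ with $\int u(\cdot,\omega)^2\le\lim V(t)/\pi=0$, so that $u(\cdot,\omega)\equiv 0$. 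Moreover the flow preserves axial symmetry and the evenness of the profile about the midpoint $p$, so $K(\omega)$ is an axially symmetric, zero-volume, $\{x=p\}$-symmetric convex set, i.e. a segment of the $x$-axis centred at $(p,0)$, or — when $b_t-a_t\to 0$ — a round disc in $\{x=p\}$ centred at $(p,0)$. Thus $K(\omega)$ is the point $(p,0)$ unless either \textrm{(A)} $L:=b_\omega-a_\omega>0$ (in which case $u(\cdot,\omega)\equiv 0$ forces $M(t):=\max_x u(x,t)=u(p,t)\to 0$), or \textrm{(B)} $b_t-a_t\to 0$ while $M(t)\to\rho>0$. The tent lower bound from concavity gives $V(t)\ge\tfrac{\pi}{3}M(t)^2(b_t-a_t)$, whence $M(t)^2(b_t-a_t)\to 0$; and for any $x_0\in(a_\omega,b_\omega)$ one has $|u_x(x_0,t)|\le M(t)/\operatorname{dist}(x_0,\{a_\omega,b_\omega\})\to 0$.

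To rule out (A) the plan is to control the poles. The pole at $b_t$ is umbilic, $\kappa_{rad}=\kappa_{axi}=:\kappa(b_t,t)$ there, and $\dot b_t=-\kappa(b_t,t)^{\alpha_1+\alpha_2}$. I would first establish a lower bound $\kappa(b_t,t)\ge c/M(t)$ with $c=c(L)>0$, for $t$ near $\omega$: since $\partial K_t$ is strictly convex (Lemma \ref{ll}) and the profile meets the axis orthogonally at $b_t$ with curvature $\kappa(b_t,t)$, comparing the profile near $b_t$ with its osculating circle of radius $1/\kappa(b_t,t)$ centred at $(b_t-1/\kappa(b_t,t),0)$ shows that either $1/\kappa(b_t,t)\le M(t)$, or $1/\kappa(b_t,t)>b_t-p\ge L/2$, the latter forcing the profile above height $L/2>M(t)$ at $x=p$ — a contradiction. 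Granting this, $\dot b_t\le-(c/M(t))^{\alpha_1+\alpha_2}$, and together with $M(t)\to 0$ this should force $b_t-a_t$ (equivalently $2(b_t-p)$) to reach $0$ at or before $\omega$; the quantitative balance — matching this against $V(t)\asymp M(t)^2$ and the centre ODE $M'(t)=-M(t)^{-\alpha_1}(-u_{xx}(p,t))^{\alpha_2}$ (using $u_x(p,t)=0$) — is the delicate part, the point being that the interval $(a_t,b_t)$ cannot remain non-degenerate as $t\to\omega$. For (B), $\kappa_{rad}(p,t)=1/M(t)\to 1/\rho$ is bounded, but strict concavity of $u(\cdot,t)$ on the interval $(a_t,b_t)$ shrinking to $\{p\}$ with $u(p,t)\ge\rho$ gives, by two applications of the mean value theorem, points $\zeta_t\to p$ with $-u_{xx}(\zeta_t,t)\ge\rho/(b_t-p)^2\to\infty$; transporting this to the centre through the parabolicity and bounded-coefficient structure of the speed equation (\ref{ve}) — exactly as in Part~2 of the previous proposition — yields $-u_{xx}(p,t)\to\infty$, so $M'(t)\le-\rho^{-\alpha_1}(-u_{xx}(p,t))^{\alpha_2}\to-\infty$ and $M(t)$ reaches $0$ in finite time, contradicting $\rho>0$.

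The main obstacle is excluding case (A). Because the limiting set is one-dimensional, there is no fixed region bounded away from the $x$-axis on which the curvature estimates of the previous proposition apply verbatim, and $\kappa_{rad}$, $\kappa_{axi}$ degenerate in complementary places — $\kappa_{rad}$ near the poles, $\kappa_{axi}$ near the centre. The crux is therefore the pole-curvature lower bound and the coupling between the pole motion $\dot b_t=-\kappa(b_t,t)^{\alpha_1+\alpha_2}$ and the centre height $M(t)$: one must rule out that the flow produces arbitrarily "wide and short" profiles, so that $b_t-a_t$ and $M(t)$ are forced to vanish together. Once (A) and (B) are excluded, $K(\omega)$ is the single point $(p,0)$ and the flow converges to a point.
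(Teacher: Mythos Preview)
Your reduction to the two degenerate cases (A) and (B) is correct and matches the paper's, but case (A) is not closed. The osculating-circle dichotomy you state is not exhaustive: in the regime of case (A) one has $M(t)\to 0$ while $b_t-p\to L/2>0$, so the intermediate range $M(t)<1/\kappa(b_t,t)\le b_t-p$ is non-empty and unaddressed; and in any event strict convexity only gives second-order tangency of the profile to its osculating circle at the pole, not a global enclosure, so the contradiction you claim in the ``large radius'' branch does not follow. More importantly, even granting the pole bound $\kappa(b_t,t)\ge c/M(t)$ and hence $\dot b_t\le-(c/M(t))^{\alpha_1+\alpha_2}$, you do not carry out the ``quantitative balance'' against the centre ODE $M'(t)=-M(t)^{-\alpha_1}(-u_{xx}(p,t))^{\alpha_2}$. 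That would require an \emph{upper} bound on $-u_{xx}(p,t)$ (to prevent $M$ from collapsing too fast) together with enough control to force $b_t-p$ to vanish by time $\omega$, and neither is supplied. You explicitly flag this as the main obstacle, and it remains one.

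The paper avoids the pole altogether. For case (A) it works on the fixed interior interval $[a,b]$ and applies the parabolic maximum principle directly to $u_{xx}$: differentiating (\ref{sc1}) twice yields a linear parabolic equation for $v=u_{xx}$ (equation (\ref{lng}) rewritten), and if the profile collapsed to the segment $[a,b]$ one would have $u_{xx}\to 0$ at interior points but $u_{xx}\to-\infty$ at $x=a,b$, contradicting $u_{xx}(x_0,t)\le\max_{\Gamma_t}u_{xx}$ on $[a,b]\times[0,t]$. For case (B) it argues symmetrically in the pole parametrisation, applying the maximum principle to the speed $\eta=v_t$ from (\ref{ve}) on $[-c,c]$: at the umbilic pole $y=0$ the speed tends to $0$, while at the lateral boundary it blows up. Your case-(B) sketch is in the same spirit, but the paper's choice of the speed as the test quantity is what makes the maximum-principle structure transparent and removes the need to ``transport'' a bound from $\zeta_t$ to the centre.
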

 \begin{proof}
 To see this, suppose not, so $u(x,t)$ will degenerate into a segment. Since $K(t)$ is symmetric with respect to the $x$-axis, this segment will either lie on the $x$-axis or be parallel to the $y$-axis.
Consider the first case where the flow  degenerates into a segment on the $x$-axis, say to $[a,b]$ with $a<b$. 
 By performing direct calculations, we find the evolution equation of $u_{xx}$:
\begin{equation}
    \begin{split}
    u_{xt }& =   \frac{\alpha_2(-u_{xx})^{\alpha_2-1}u_{xxx}}{u^{\alpha_1}(1+u_x^2)^{\beta}} + \frac{\left[\alpha_1 u_x u^{\alpha_1-1}(1+u_x^2)^{\beta}+2\beta u_xu_{xx}u^{\alpha_1}(1+u_x^2)^{\beta-1}\right](-u_{xx})^{\alpha_2}}{u^{2\alpha_1}(1+u_x^2)^{2\beta}}, \\
    & =   \frac{\alpha_2(-u_{xx})^{\alpha_2-1}}{u^{\alpha_1}(1+u_x^2)^{\beta}}u_{xxx}+\frac{ 2\beta u_x  (-u_{xx})^{\alpha_2}}{u^{\alpha_1}(1+u_x^2)^{\beta+1}}u_{xx}  + \frac{ \alpha_1   (-u_{xx})^{\alpha_2}}{u^{\alpha_1+1}(1+u_x^2)^{\beta}}  u_x \\
    &
\end{split}
\end{equation}   
and, in turn,
\begin{equation}\label{lng}
    \begin{split}
    u_{xxt } &=  \frac{\alpha_2(-u_{xx})^{\alpha_2-1}}{u^{\alpha_1}(1+u_x^2)^{\beta}} u_{xxxx}+ \frac{  2\beta u_x   (-u_{xx})^{\alpha_2}}{u^{\alpha_1}(1+u_x^2)^{\beta+1} }u_{xxx} +\frac{ \alpha_1    (-u_{xx})^{\alpha_2}}{u^{\alpha_1+1}(1+u_x^2)^{\beta}}u_{xx} \\
    & -  \frac{(\alpha_2-1)\alpha_2(-u_{xx})^{\alpha_2-2}}{u^{\alpha_1} (1+u_x^2)^{\beta}} u^2_{xxx} - \frac{\alpha_1\alpha_2u_x(-u_{xx})^{\alpha_2-1}}{u^{\alpha_1+1}(1+u_x^2)^{\beta}   } u_{xxx}+ \frac{2\alpha_2 \beta u_x(-u_{xx})^{\alpha_2}}{u^{\alpha_1}(1+u_x^2)^{\beta+1}  } u_{xxx}\\
    &- \frac{  2\beta     (-u_{xx})^{\alpha_2+1}}{u^{\alpha_1+1}(1+u_x^2)^{\beta} }u_{xx}  - \frac{  2\beta \alpha_2 u_x  u_{xxx} (-u_{xx})^{\alpha_2-1}}{u^{\alpha_1+1}(1+u_x^2)^{\beta} }u_{xx} + \frac{  2\beta u_x   (-u_{xx})^{\alpha_2}}{u^{\alpha_1+1}(1+u_x^2)^{\beta} }u_{xx}  \\
    &+   \frac{  2\beta(\alpha_1+1) u^2_x   (-u_{xx})^{\alpha_2}}{u^{\alpha_1+2}(1+u_x^2)^{\beta} }u_{xx}- \frac{  4\beta^2 u^2_x   (-u_{xx})^{\alpha_2+1}}{u^{\alpha_1+1}(1+u_x^2)^{\beta+1} }u_{xx}+\frac{ \alpha_1  \alpha_2  (-u_{xx})^{\alpha_2-1}u_{xxx}}{u^{\alpha_1}(1+u_x^2)^{\beta+1}}u_{x}\\
    & -\frac{ \alpha^2_1    (-u_{xx})^{\alpha_2}}{u^{\alpha_1+1}(1+u_x^2)^{\beta+1}}u^2_{x}+\frac{ 2\alpha_1 (\beta+1)   (-u_{xx})^{\alpha_2+1}}{u^{\alpha_1}(1+u_x^2)^{\beta+2}}u^2_{x}.
\end{split}
\end{equation}
Let $v=u_{xx}$. Then
\begin{equation} 
    \begin{split}
    v_{t } &=  \frac{\alpha_2(-u_{xx})^{\alpha_2-1}}{u^{\alpha_1}(1+u_x^2)^{\beta}} v_{xx}+ \frac{  2\beta u_x   (-u_{xx})^{\alpha_2}}{u^{\alpha_1}(1+u_x^2)^{\beta+1} }v_{x} +\frac{ \alpha_1    (-u_{xx})^{\alpha_2}}{u^{\alpha_1+1}(1+u_x^2)^{\beta}}v  \\
    &+\Big[ -  \frac{(\alpha_2-1)\alpha_2(-u_{xx})^{\alpha_2-2}u_{xxx}}{u^{\alpha_1} (1+u_x^2)^{\beta}}   - \frac{\alpha_1\alpha_2u_x(-u_{xx})^{\alpha_2-1}}{u^{\alpha_1+1}(1+u_x^2)^{\beta}   } + \frac{2\alpha_2 \beta u_x(-u_{xx})^{\alpha_2}}{u^{\alpha_1}(1+u_x^2)^{\beta+1}  } \Big]v_{x}\\
    &+\Big[- \frac{  2\beta     (-u_{xx})^{\alpha_2+1}}{u^{\alpha_1+1}(1+u_x^2)^{\beta} }   - \frac{  2\beta \alpha_2 u_x  u_{xxx} (-u_{xx})^{\alpha_2-1}}{u^{\alpha_1+1}(1+u_x^2)^{\beta} }  + \frac{  2\beta u_x   (-u_{xx})^{\alpha_2}}{u^{\alpha_1+1}(1+u_x^2)^{\beta} }\Big]v  \\
    &+   \frac{  2\beta(\alpha_1+1) u^2_x   (-u_{xx})^{\alpha_2}}{u^{\alpha_1+2}(1+u_x^2)^{\beta} }v- \frac{  4\beta^2 u^2_x   (-u_{xx})^{\alpha_2+1}}{u^{\alpha_1+1}(1+u_x^2)^{\beta+1} }v+\frac{ \alpha_1  \alpha_2  (-u_{xx})^{\alpha_2-1}u_{x}}{u^{\alpha_1}(1+u_x^2)^{\beta+1}}v_{x}\\
    & +\frac{ \alpha^2_1   u^2_{x} (-u_{xx})^{\alpha_2-1}}{u^{\alpha_1+1}(1+u_x^2)^{\beta+1}}v-\frac{ 2\alpha_1 (\beta+1) u^2_{x}  (-u_{xx})^{\alpha_2}}{u^{\alpha_1}(1+u_x^2)^{\beta+2}}v
\end{split}
\end{equation}
Since $u(x,t)$ degenerates into a segment, at every point of $(a,b)$, say at $x_0 \in (a,b)$, $u_{xx}$ tends  to zero, and at endpoints $u_{xx}$ tends to negative infinity. For every $t \in [0,\omega)$,  consider $u_{xx}:\Omega_t:=[a,b]\times [0,t] \longrightarrow \mathbb R$, and let $\Gamma_t$ be the parabolic boundary of $\Omega_t$. We have
\begin{equation}\label{iam}
 u_{xx}(x_0,t) \leq \max\limits_{\Gamma_t}u_{xx}(x,t).   
\end{equation}
As $t \longrightarrow \omega$, the right-hand side of the equation (\ref{iam}) tends to either a negative number or negative infinity while the left-hand side tends to zero. So, this case cannot occur. 
Consider the case where the flow degenerates into a segment parallel to the $y$-axis, say to $[-c,c]\times \{\bar x\}$. We define the function  $\eta:\Sigma_t:=[-c,c] \times [t_0,t]\longrightarrow \mathbb R$  by
\begin{equation} 
    \eta=   v_t =  \frac{1}{(1+v_y^2)^{\frac{\alpha_1+3\alpha_2-1}{2}}}   v^{\alpha_1}_{zz}v^{\alpha_2}_{yy}.
     \end{equation}
 The evolution equation of   $\eta$, equation (\ref{ve}),   is parabolic with uniformly bounded continuous coefficients on $\Sigma_t$:
\begin{equation*}
\begin{split}
  \eta_t(y,t)&= \Big[ \frac{-2\mu v_y  v^{\alpha_1}_{zz}v^{\alpha_2}_{yy}}{(1+v_y^2)^{\mu+1} }\Big] \eta_y+ \Big[ \frac{\alpha_1  v^{\alpha_2}_{yy}v^{\alpha_1-1}_{zz}}{(1+v_y^2)^{\mu} }\Big] \eta_{zz}+   \Big[ \frac{\alpha_2  v^{\alpha_2-1}_{yy}v^{\alpha_1}_{zz}}{(1+v_y^2)^{\mu} }\Big] \eta_{yy}\\
  &=     \Big[ \frac{\alpha_2  v^{\alpha_2-1}_{yy}v^{\alpha_1}_{zz}}{(1+v_y^2)^{\mu} }\Big] \eta_{yy} +\Big[ \frac{-2\mu v_y  v^{\alpha_1}_{zz}v^{\alpha_2}_{yy}}{(1+v_y^2)^{\mu+1} }\Big] \eta_y+  \frac{ \alpha_1 \eta_{zz}}{v_{zz}} \eta.
 \end{split}
 \end{equation*}
At $y=0$,  solutions are umbilic, so both principal curvatures and, in turn, the speed tends to zero at this point. Since at the endpoints, the speed tends to infinity an argument similar to the one in the previous part shows this is impossible. Therefore, the solutions cannot degenerate into segments. Consequently,  it tend to a point as $t \longrightarrow \omega$.
 \end{proof}    
  \section{Asymptotic behavior of the flow}
While it seems plausible to prove the previous results even when relaxing the condition that the initial profile curve is even,  when studying the asymptotic behavior of the flow, this condition plays a crucial role. 
 We demonstrate that by properly rescaling the solutions, starting from an even initial data, rescaled solutions converge to a convex hypersurface.
  \begin{prop}
Suppose the initial conditions are the same as specified in Theorem \ref{thm1}. Then, for every sequence of times $\{t_n\}\nearrow \omega$, the properly rescaled solutions possess a subsequence that converges to a convex body.
\end{prop}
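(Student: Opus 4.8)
The plan is to renormalize the shrinking bodies to a fixed enclosed volume and then extract a Hausdorff‑convergent subsequence by the Blaschke selection theorem; the whole difficulty is funneled into one scale‑invariant estimate on the shape of the rescaled solution, and it is for this estimate that the even‑profile hypothesis is essential. Fix $V_0>0$ and set $\mu(t)=(V_0/V(t))^{1/3}$, so $\mu(t)\to\infty$ as $t\to\omega$ by the previous proposition. Since the right‑hand side of (\ref{sc1}) depends only on $u$, $u_x^2$ and $u_{xx}$, all invariant under the reflection $x\mapsto 2p-x$ about the midpoint $p$ of $(a_t,b_t)$, uniqueness of solutions forces the profile to stay even about $p$ and the two endpoints to stay symmetric about it, so $p$ is fixed in time; translate so that $p=0$. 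Put $\tilde K(t)=\mu(t)K(t)$, an axially symmetric convex body with even profile $\tilde u(x,t)=\mu(t)u(x/\mu(t),t)$ on $(-\tilde\ell(t),\tilde\ell(t))$ and with $V(\tilde K(t))=\mu(t)^3V(t)=V_0$. By the Blaschke selection theorem it then suffices to show that, given a sequence $t_n\nearrow\omega$, along some subsequence the circumradii $\rho_+(\tilde K(t_n))$ are bounded above and the inradii $\rho_-(\tilde K(t_n))$ are bounded below; a further subsequence converges in Hausdorff distance to a compact convex set with nonempty interior, that is, to a convex body.

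First I would reduce these two bounds to a single one. Write $M(t)=\max u(\cdot,t)=u(0,t)$ and $\ell(t)=\tfrac12(b_t-a_t)$. Concavity of the profile gives the elementary comparisons $\tfrac23\pi M^2\ell\le V(t)\le2\pi M^2\ell$, $\rho_-(K(t))\asymp\min\{M,\ell\}$ and $\rho_+(K(t))\asymp\max\{M,\ell\}$ with absolute constants, while $\tfrac43\pi\rho_-(K(t))^3\le V(t)\le\tfrac43\pi\rho_+(K(t))^3$ gives $\rho_-(\tilde K(t))\le(3V_0/4\pi)^{1/3}\le\rho_+(\tilde K(t))$ for free. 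Combining these relations, one checks that along any sequence the three conditions --- $\rho_+(\tilde K)$ bounded above, $\rho_-(\tilde K)$ bounded below, and the shape ratio $s(t):=\ell(t)/M(t)$ confined to a fixed compact subinterval of $(0,\infty)$ --- are equivalent. So the proof comes down to showing that $s(t)$ stays away from $0$ and $\infty$ as $t\to\omega$. I note that the earlier ``converges to a point'' results rule out only the unrescaled degeneracies ($M\to0$ with $\ell$ bounded below, or $\ell\to0$ with $M$ bounded below) and hence do not by themselves yield this.

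The step I expect to be the main obstacle is this two‑sided bound on $s$, and it is here that evenness is used. To exclude $s\to\infty$ (an elongating ``cigar''), I would use that the poles $x=\pm b_t$ are umbilic with a common curvature $\kappa_{\mathrm{pole}}(t)$ and move inward along the axis at speed $\kappa_{\mathrm{pole}}^{\alpha_1+\alpha_2}$, so that $\dot\ell=-\kappa_{\mathrm{pole}}^{\alpha_1+\alpha_2}$; for an elongated axially symmetric surface $\kappa_{\mathrm{pole}}$ is large relative to $1/M$, and one quantifies this using the parabolic evolution equation (\ref{ve}) for the speed near a pole, the maximum‑principle and barrier comparisons already used in the proof of Lemma \ref{ll} and of the singularity propositions, and comparison of a neighbourhood of the pole with a sphere shrinking under the same flow (whose survival time is $\asymp\kappa_{\mathrm{pole}}^{-(\alpha_1+\alpha_2+1)}$); this forces $\ell$ to decay fast enough relative to $M$. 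Symmetrically, to exclude $s\to0$ (a flattening ``pancake''), I would use that by evenness the profile maximum sits at the fixed equator $x=0$, where $u_x=0$ and, by (\ref{sc1}), $-\dot M=M^{-\alpha_1}(-u_{xx}(0,t))^{\alpha_2}$, and compare this equatorial retraction rate with the pole rate through the evolution of the speed, equations (\ref{H}) and (\ref{ve}); the reflection symmetry is precisely what lets one pair symmetric points and keep the coefficients of these evolution equations uniformly controlled, exactly as in Lemma \ref{ll}. The delicate point throughout is that the curvature quantities are controlled only scale‑invariantly, so each comparison surface must be a time‑dependent barrier matched to the current size of the evolving body, and without the even hypothesis the equator can drift, which destroys the symmetric comparisons.

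Finally, granting $s(t_n)\in[c,C]$, I would pass to a subsequence with $\tilde\ell(t_n)\to\ell_\infty\in(0,\infty)$. Then $\tilde u(\cdot,t_n)$ is uniformly bounded by the volume comparisons, uniformly Lipschitz on each compact subinterval of $(-\ell_\infty,\ell_\infty)$ by concavity, and monotone near the endpoints, so a further subsequence converges, locally uniformly on $(-\ell_\infty,\ell_\infty)$, to a concave function $u_\infty$ (Arzelà--Ascoli in the interior, Helly's selection theorem near the endpoints); dominated convergence gives $\pi\int_{-\ell_\infty}^{\ell_\infty}u_\infty^2=V_0>0$, so $u_\infty\not\equiv0$, and the solids of revolution $\tilde K(t_n)$ converge in the Hausdorff metric to the solid of revolution of $u_\infty$, which is a convex body. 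This would complete the proof.
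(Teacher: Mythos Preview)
Your strategy diverges from the paper's in a way that creates a genuine gap. You rescale \emph{homothetically}, $\tilde K(t)=\mu(t)K(t)$ with $\mu(t)=(V_0/V(t))^{1/3}$, and then correctly reduce the whole question to a two-sided bound on the shape ratio $s(t)=\ell(t)/M(t)$. But that bound is never proved: the barrier/speed-evolution sketch you offer for excluding $s\to\infty$ and $s\to0$ is programmatic, and in particular the assertion that ``for an elongated axially symmetric surface $\kappa_{\mathrm{pole}}$ is large relative to $1/M$'' does not follow from convexity or axial symmetry alone (a long cigar can have hemispherical caps with $\kappa_{\mathrm{pole}}\approx 1/M$), so a genuine PDE estimate would be needed and is not supplied. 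The argument stalls exactly where you flag it as the main obstacle.

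The paper avoids this obstacle altogether by using an \emph{anisotropic} rescaling: it scales the axial variable by $1/(b_t-a_t)$ and the profile height by $\sqrt{\pi(b_t-a_t)/V(t)}$. The first factor forces the rescaled axial domain $I_t$ to have length exactly $1$ for every $t$ (evenness is used only to pin its center, via $\partial_t a_t=-\partial_t b_t$, so that $I_t\to(-\tfrac12,\tfrac12)$); the second factor forces the enclosed volume to equal $\pi$. With the domain length frozen, a ``pancake'' degeneration is impossible, and since a concave profile on an interval of length $1$ with $\pi\!\int\tilde u^2=\pi$ satisfies $\tilde u_{\max}\le\sqrt3$, the rescaled bodies sit in a fixed compact annulus; Blaschke selection then finishes the proof in one line. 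No shape-ratio estimate is ever needed.

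The trade-off is worth noting: the paper's anisotropic rescaling proves the proposition as stated with almost no work, but the limiting body carries no information about the \emph{homothetic} asymptotic shape of the flow (an anisotropically rescaled degenerating family can converge to something round). Your homothetic route, if the $s(t)$ bound could actually be established, would yield the stronger and more geometrically meaningful conclusion; as written, however, it does not close.
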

\begin{proof}
 Since the   profile curve of the initial surface is contracting with the same speed at points $x$ equally distanced from the midpoint of $(a_0,b_0)$,  for every $t \in[0,\omega)$, the profile curve corresponding to the solution at time $t$ is even.    We note that for every $t \in [0,\omega)$, there exist $x_t \in (a_t,b_t)$ such that  $$ V(t)=\pi\int\limits_{a_t}^{b_t} u^2(x,t) \dd x = \pi (b_t-a_t)u^2(x_t,t).$$
     Let $p$ be the point on the $x$-axis to which   solutions converge.    
   Consider 
   $$   I_t=(c_t,d_t):=\left(\frac{\pi(a_t-p)u^2(x_t,t)}{{V(t)}}, \frac{\pi(b_t-p)u^2(x_t,t)}{{V(t)}}\right)= (\frac{a_t-p}{b_t-a_t},\frac{b_t-p}{b_t-a_t}),$$
   and define  $T:(a_t,b_t) \longrightarrow I_t$ by 
   $$x \mapsto \frac{\pi(x-p)u^2(x_t,t)}{{V(t)}}=\frac{x-p}{b_t-a_t}.$$
   Since solutions are even, the speed at the endpoints are the same but of the opposite signs,  
      $0\neq  \pdv{a_t}{t}=-  \pdv{b_t}{t}$. In turn,
      \begin{eqnarray}
          \lim\limits_{t\longrightarrow \omega}\frac{a_t-p}{b_t-a_t}=-\frac{1}{2}, &\text{ and } \lim\limits_{t\longrightarrow T}\frac{b_t-p}{b_t-a_t}=\frac{1}{2}.
      \end{eqnarray}
   As a result  $I_t\longrightarrow(-\frac{1}{2},\frac{1}{2})$ as   $t \longrightarrow \omega$. 
  Consider the rescaling   $\widetilde{ u}:I_t \longrightarrow \mathbb{R}^{\geq0} $ defined by
  \begin{equation}
      y \mapsto \sqrt\frac{\pi(b_t-a_t)}{V(t)}u(\frac{yV(t)}{\pi u^2(x_t,t)}+p,t)
  \end{equation}
   We note that   rescaled solutions enclose a domain of  constant volume. More precisely,  
   $$\widetilde{V}(t)= \pi \int\limits_{c_t}^{d_t} \frac{\pi(b_t-a_t)}{V(t)}u^2(\frac{yV(t)}{\pi u^2(x_t,t)}+p,t)\dd y= \frac{\pi(b_t-a_t)}{V(t)}\pi \int\limits_{a_t}^{b_t} u^2(x,t) \frac{\pi u^2(x_t,t)}{V(t)} \dd x$$
   $$ =  \frac{\pi u^2(x_t,t)(b_t-a_t)}{V(t)} \pi =\pi. $$ 
If the rescaled solutions degenerate, then since $I_t \longrightarrow (-\frac{1}{2},\frac{1}{2})$, they will degenerate into the segment $(-\frac{1}{2},\frac{1}{2})$. However, this contradicts the fact that the rescaled solutions enclose constant volume $\pi$. In addition, since  $I_t \longrightarrow (-\frac{1}{2},\frac{1}{2})$ and volume is constant,  the rescaled solutions are bounded from above. Therefore, rescaled solutions  are included in a compact annulus, and it follows from Blaschke selection theorem that   $\{\widetilde{K}(t_n)\}$ is subsequentially  convergent to a convex body $\widetilde K$.
\end{proof}

\section*{Acknowledgments}
I would like to thank  Alina Stancu for her  comments and encouragement. This work would not have been possible without her  guidance and support.\\ 
\\
 
\end{document}